\newtheorem{theorem}{Theorem}
\newenvironment{proof}[1][Proof]{\textbf{#1.} }{\ \rule{0.5em}{0.5em}}
\begin{document}

\author{Marek Galewski \\
%EndAName
Institute of Mathematics, \\
Technical University of Lodz,\\
Wolczanska 215, 90-924 Lodz, Poland, \\
marek.galewski@p.lodz.pl}
\title{A note on the dependence on parameters for a nonlinear system via
monotonicity theory}
\maketitle

\begin{abstract}
Using monotonicity theory we investigate the continuous dependence on
parameters for the discrete BVPs which can be written in a form of a
nonlinear system.
\end{abstract}

\section{Introduction}

In this note, which somehow completes \cite{galewskiCOERCIVE}, using
monotonicity theory we investigate the dependence on parameters for such
discrete boundary value problems which can put in a form of a so called
nonlinear system. Let $M$ be a metric space. We are interested in systems%
\begin{equation}
Ax=\lambda h(x,u),\text{ }x\in R^{n},\text{ }u\in M,  \label{non_sys}
\end{equation}%
where $\lambda >0$; $A$ is a $n\times n$ symmetric matrix; $h:R^{n}\times
M\rightarrow R^{n}$ is a continuous function; $u\in M$ serves as a
parameter. We assume that the function $h$ has a following form $h(x,u)=%
\left[ h_{1}\left( x_{1},u\right) ,h_{2}\left( x_{2},u\right)
,...,h_{n}\left( x_{n},u\right) \right] $, where $h_{i}:R\times M\rightarrow
R$ are continuous for $i=1,2,...,n.\bigskip $

Such systems arise often when a physical model undergoes discretization.
Indeed, let us for simplicity consider the following difference equation for 
$k\in \left\{ 1,2,...,n\right\} $ 
\begin{equation}
\Delta ^{2}x\left( k-1\right) =\lambda f\left( k,x\left( k\right) ,u\left(
k\right) \right) \text{, }x\left( 0\right) =x(n+1)=0  \label{problem_zintor}
\end{equation}%
with a positive parameter $\lambda $ and a continuous nonlinearity $%
f:\left\{ 1,2,...,n\right\} \times R\times R\rightarrow R$, $u$ stands for a
parameter. Such type of a difference equation may arise from evaluating the
Dirichlet boundary value problem 
\begin{equation*}
\frac{d^{2}}{dt^{2}}x=\lambda g\left( t,x,u\right) \text{, }0<t<1\text{, }%
x\left( 0\right) =x(1)=0
\end{equation*}%
where $g:\left[ 0,1\right] \times R\times R\rightarrow R$ is continuous and $%
f=g$ after changing of scale. With a positive definite $n\times n$ real
symmetric matrix 
\begin{equation*}
B=\left[ 
\begin{array}{ccccc}
2 & -1 & 0 & ... & 0 \\ 
-1 & 2 & -1 & ... & 0 \\ 
... & ... & ... & ... & ... \\ 
0 & ... & -1 & 2 & -1 \\ 
0 & ... & 0 & -1 & 2%
\end{array}%
\right] _{n\times n}
\end{equation*}%
and with $f(x,u)=\left[ f\left( 1,x\left( 1\right) ,u\left( 1\right) \right)
,...,f\left( n,x\left( n\right) ,u\left( n\right) \right) \right] $ problem (%
\ref{problem_zintor}) can be written into a form $Bx=\lambda f(x,u)$ for $%
x,u\in R^{n}.$ \bigskip

The existence of solutions and their multiplicity for nonlinear systems
received some attention lately, see \cite{YanZhangAPMacc} and references
therein. No results in the literature however concern the dependence on
parameters for a nonlinear system. The question whether the system depends
continuously on a parameter is vital in context of the applications, where
the measurements are known with some accuracy. Therefore, in the boundary
value problems for differential equations there are some results towards the
dependence of a solution on a functional parameter, see \cite%
{LedzewiczWalczak} with references therein. This is not the case with
discrete equations where we have only some results which use the critical
point theory, see \cite{galewskiCOERCIVE}. The approach of this note is
different from this of \cite{galewskiCOERCIVE}. While in \cite%
{galewskiCOERCIVE} the dependence on parameters is investigated through the
appropriate action functional, in the present note we investigate the
nonlinear system itself. An approach presented here is only applicable when
solutions are unique and therefore does not cover problems investigated in 
\cite{galewskiCOERCIVE}. Moreover, now we have included the dependence on a
parameter from some metric space.

\section{Continuous dependence on parameters}

We will fix two sets of assumptions depending on whether $f$ is super- or
sub-quadratic.

\begin{description}
\item[A1] \textit{There exist constants }$\gamma >2$, \textit{and }$\zeta ,$ 
$\theta >0$ \textit{such that }$\sum_{k=1}^{n}h_{k}(z_{k},v)z_{k}\geq \zeta
|z|_{R^{n}}^{\gamma }$\textit{\ for }$z=\left( z_{1},z_{2},...,z_{n}\right)
\in R^{n}$\textit{\ with }$\left\vert z\right\vert _{R^{n}}\geq \theta $%
\textit{\ and for all }$v\in M;$

\item[\textbf{A2}] \textit{there exists a constant }$a>0$\textit{\ such that}%
\begin{equation*}
\left( h_{k}(z_{1},v)-h_{k}(z_{2},v)\right) (z_{1}-z_{2})\geq
a|z_{1}-z_{2}|^{2}
\end{equation*}%
\textit{for each }$k=1,2,...,n,$\textit{\ for all }$z_{1},z_{2}\in R$\textit{%
\ and for all }$v\in M.$
\end{description}

Assumption \textbf{A1} is required so that to obtain the continuous
dependence on parameters and it does not influence the existence for which 
\textbf{A2} suffice.

\begin{theorem}
\label{TwDepPam}Assume \textbf{A1}, \textbf{A2 }and that $h\left( 0,u\right)
\neq 0$ for all $u\in M$. \newline
For any fixed parameter $u\in M$ and for all $\lambda \in (\frac{\left\Vert
A\right\Vert }{a},\infty )$ problem (\ref{non_sys}) have a nontrivial
solution in $R^{n}$. \newline
Let $\left\{ u_{k}\right\} _{k=1}^{\infty }\subset M$ be a convergent
sequence of parameters with $\lim_{k\rightarrow \infty }u_{k}=\overline{u}%
\in M$ and let $\lambda \in (\frac{\left\Vert A\right\Vert }{a},\infty )$ be
fixed. For any sequence $\left\{ x_{k}\right\} _{k=1}^{\infty }$ of
nontrivial solutions to (\ref{non_sys}) corresponding to $\left\{
u_{k}\right\} _{k=1}^{\infty }$, there exist a subsequence $\left\{
x_{k_{i}}\right\} _{i=1}^{\infty }\subset R^{n}$ and an element $\overline{x}%
\in R^{n}$, that $\lim_{i\rightarrow \infty }x_{k_{i}}=\overline{x}$.
Moreover, $\overline{x}$ satisfies problem (\ref{non_sys}) with $u=\overline{%
u}$, i.e. $A\overline{x}=\lambda h\left( \overline{x},\overline{u}\right) .$
\end{theorem}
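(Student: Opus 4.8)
The plan is to reformulate the system as an operator equation and exploit strong monotonicity. Define $F_\lambda\colon R^n\to R^n$ by $F_\lambda(x)=Ax-\lambda h(x,u)$ for a fixed parameter $u$. Assumption \textbf{A2} says each $h_k(\cdot,v)$ is strongly monotone with constant $a$, hence $x\mapsto \lambda h(x,u)$ is strongly monotone with constant $\lambda a$, while $A$ is Lipschitz with constant $\|A\|$. Thus for $\lambda>\|A\|/a$ the map $-F_\lambda$ (or equivalently $\lambda h(\cdot,u)-A\,\cdot$) is strongly monotone with constant $\lambda a-\|A\|>0$ and continuous; by the Browder--Minty theorem on $R^n$ (or just the Lax--Milgram-type argument for monotone coercive operators in finite dimensions) it is a bijection of $R^n$ onto $R^n$, so $F_\lambda(x)=0$ has a unique solution $x=x(u)$. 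Since $h(0,u)\neq 0$, $x=0$ is not a solution, so the solution is nontrivial. This settles the first assertion, and also shows the solution is \emph{unique} for each $u$, which is what makes the subsequence argument below actually pin down the limit.

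For the convergence statement, first I would establish an a priori bound on the solutions $\{x_k\}$ that is uniform in $k$. Take the inner product of $Ax_k=\lambda h(x_k,u_k)$ with $x_k$: the left side is $\langle Ax_k,x_k\rangle\le\|A\|\,|x_k|^2$, while by \textbf{A1} the right side is $\ge \lambda\zeta|x_k|^\gamma$ once $|x_k|\ge\theta$. Hence $\lambda\zeta|x_k|^{\gamma}\le \|A\|\,|x_k|^2$, i.e. $|x_k|^{\gamma-2}\le \|A\|/(\lambda\zeta)$, which bounds $|x_k|$ by a constant independent of $k$ (on the set where $|x_k|\ge\theta$; and trivially $|x_k|\le\theta$ otherwise). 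So $\{x_k\}$ is bounded in $R^n$, and by Bolzano--Weierstrass a subsequence $x_{k_i}\to\overline x$ for some $\overline x\in R^n$.

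It remains to pass to the limit. Since $u_{k_i}\to\overline u$, $x_{k_i}\to\overline x$, and $h$ is continuous, $h(x_{k_i},u_{k_i})\to h(\overline x,\overline u)$; the left sides $Ax_{k_i}\to A\overline x$ by continuity of the linear map. Passing to the limit in $Ax_{k_i}=\lambda h(x_{k_i},u_{k_i})$ gives $A\overline x=\lambda h(\overline x,\overline u)$, so $\overline x$ solves (\ref{non_sys}) with $u=\overline u$. (If one also wants $\overline x$ nontrivial: by strong monotonicity applied at $\overline u$, the unique solution at $\overline u$ is nonzero because $h(0,\overline u)\neq 0$; and in fact $\overline x$ must equal that unique solution, so the whole sequence $x_k$ converges, not merely a subsequence.)

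The only mildly delicate point — the ``main obstacle'' — is the a priori estimate: \textbf{A1} only controls $\sum_k h_k(z_k,v)z_k$ for $|z|\ge\theta$, so one has to handle the alternative $|x_k|<\theta$ separately, and one must make sure the constant coming out of $|x_k|^{\gamma-2}\le\|A\|/(\lambda\zeta)$ is genuinely independent of $k$, which it is since $\gamma>2$ and $\|A\|,\lambda,\zeta$ do not depend on $k$. Everything else is a routine application of strong monotonicity (for existence/uniqueness) and continuity (for the limit passage).
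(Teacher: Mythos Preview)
Your proof is correct and follows essentially the same approach as the paper: define the operator $K(x)=\lambda h(x,u)-Ax$, verify strong monotonicity via \textbf{A2} to get existence and uniqueness for $\lambda>\|A\|/a$, use \textbf{A1} together with $(Ax_k,x_k)\le\|A\|\,|x_k|^2$ to obtain the uniform bound $|x_k|^{\gamma-2}\le \|A\|/(\lambda\zeta)$, extract a convergent subsequence, and pass to the limit by continuity. Your additional remark that uniqueness forces the \emph{whole} sequence to converge is a welcome sharpening that the paper leaves implicit.
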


\begin{proof}
Let us fix $u\in M$ and define an operator $K:R^{n}\rightarrow R^{n}$ by $%
K\left( x\right) =\lambda h\left( x,u\right) -Ax$. Then for $x,y\in R^{n}$
we have using \textbf{A2} 
\begin{equation*}
\begin{array}{l}
(Kx-Ky,x-y)_{R^{n}}=\lambda (h(x,u)-h(y,u),x-y)_{R^{n}}-(Ax-Ay,x-y)_{R^{n}}
\\ 
\geq \left( \lambda a-\left\Vert A\right\Vert \right) |x-y|_{R^{n}}^{2}\text{%
.}%
\end{array}%
\end{equation*}%
Hence $K$ is a strongly monotone continuous operator as long as $\lambda >%
\frac{\left\Vert A\right\Vert }{a}$ and so by the strongly monotone
principle, see \cite{deimling}, equation $Kw=0$ has a unique solution $%
w^{\ast }\in R^{n}$. Hence (\ref{non_sys}) has a nontrivial solution.

Corresponding to $\left\{ u_{k}\right\} _{k=1}^{\infty }$, there exists a
sequence $\left\{ x_{k}\right\} _{k=1}^{\infty }$ of solutions to (\ref%
{non_sys}) and also there exists exactly one $\overline{x}\in R^{n}$ such
that $A\overline{x}=h(\overline{x},\overline{u}).$

Now, we show that $\left\{ x_{k}\right\} _{k=1}^{\infty }$ is bounded.
Indeed, take an equation 
\begin{equation}
Ax=h(x,u_{k})  \label{pomequ}
\end{equation}%
with a fixed $u_{k}$. Let $x_{k}$ denotes its (unique) solution. If $%
\left\vert x_{k}\right\vert _{R^{n}}\leq \theta $ there is nothing to prove.
Otherwise, we multiply both sides of (\ref{pomequ}) with $x=x_{k}$ by $x_{k}$%
. Using \textbf{A1} and the inequality $\left( Ax_{k},x_{k}\right)
_{R^{n}}\leq \left\Vert A\right\Vert |x_{k}|_{R^{n}}^{2}$, we have 
\begin{equation}
\zeta |x_{k}|_{R^{n}}^{\gamma -2}\leq \left\Vert A\right\Vert .
\label{estim_s}
\end{equation}%
Hence a sequence $\left\{ x_{k}\right\} _{k=1}^{\infty }$ is bounded and it
has a convergent subsequence $\left\{ x_{k_{i}}\right\} _{i=1}^{\infty }$;
suppose element $\widetilde{x}\neq \overline{x}$ to be its limit. By
continuity letting $i\rightarrow \infty $ in both sides of $%
Ax_{k_{i}}=h(x_{k_{i}},u_{k_{i}})$ we get the conclusion that $A\widetilde{x}%
=h(\widetilde{x},\overline{u})$. Now by the uniqueness of solution $%
\widetilde{x}=\overline{x}$ and the assertion follows.
\end{proof}

Now we consider the subquadratic case assuming that

\begin{description}
\item[A3] \textit{There exist constants }$\mu <2$, $\theta _{1},\nu >0$%
\textit{\ such that }$\sum_{k=1}^{n}h_{k}(z_{k},v)z_{k}\leq \nu
|z|_{R^{n}}^{\mu }$\textit{\ for }$z=\left( z_{1},z_{2},...,z_{n}\right) \in
R^{n}$\textit{\ with }$\left\vert z\right\vert _{R^{n}}\geq \theta _{1}$%
\textit{\ and for all }$v\in M;$

\item[\textbf{A4}] \textit{there exist a constant }$b>0$\textit{\ such that }%
\begin{equation*}
\left( h_{k}(z_{1},v)-h_{k}(z_{2},v)\right) (z_{1}-z_{2})\leq
b|z_{1}-z_{2}|^{2}\mathit{\ }
\end{equation*}%
\textit{for each }$k=1,2,...,n,$\textit{\ for} \textit{all }$z_{1},z_{2}\in
R $\textit{\ and for all }$v\in M.$
\end{description}

Let us denote by $\lambda _{1}\leq \lambda _{2}\leq ...\leq \lambda _{n}$
the eigenvalues of $A$.

\begin{theorem}
Assume \textbf{A3}, \textbf{A4 }and that $h\left( 0,u\right) \neq 0$ for all 
$u\in M$. Let moreover $A$ be positive definite. Than all assertions of
Theorem \ref{TwDepPam} hold with $\lambda \in \left( 0,\frac{\lambda _{1}}{b}%
\right) $.
\end{theorem}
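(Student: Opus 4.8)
The plan is to mimic the proof of Theorem \ref{TwDepPam}, interchanging the roles played by $\left\Vert A\right\Vert $ and by the least eigenvalue $\lambda _{1}$: since the nonlinearity is now subquadratic and $A$ is positive definite, it is the coercivity of $A$ from below that does the work.

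First, fix $u\in M$ and $\lambda \in \left( 0,\frac{\lambda _{1}}{b}\right) $ and define $K:R^{n}\rightarrow R^{n}$ by $K\left( x\right) =Ax-\lambda h\left( x,u\right) $ (the sign is flipped relative to the proof of Theorem \ref{TwDepPam}, because \textbf{A4} supplies an upper, not a lower, bound on the monotonicity of $h$). For $x,y\in R^{n}$, using that $A$ is positive definite, so $\left( Ax-Ay,x-y\right) _{R^{n}}\geq \lambda _{1}|x-y|_{R^{n}}^{2}$, together with \textbf{A4}, which gives $\left( h(x,u)-h(y,u),x-y\right) _{R^{n}}=\sum_{k=1}^{n}\left( h_{k}(x_{k},u)-h_{k}(y_{k},u)\right) (x_{k}-y_{k})\leq b|x-y|_{R^{n}}^{2}$, one obtains $\left( Kx-Ky,x-y\right) _{R^{n}}\geq \left( \lambda _{1}-\lambda b\right) |x-y|_{R^{n}}^{2}$, so $K$ is continuous and strongly monotone precisely when $\lambda <\frac{\lambda _{1}}{b}$. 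The strongly monotone principle, see \cite{deimling}, then yields a unique zero $w^{\ast }\in R^{n}$ of $K$, i.e. a unique solution of (\ref{non_sys}); and $w^{\ast }\neq 0$, since $x=0$ would force $0=\lambda h(0,u)$, impossible because $\lambda >0$ and $h(0,u)\neq 0$.

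Second, for the boundedness of the sequence $\left\{ x_{k}\right\} _{k=1}^{\infty }$ of solutions to $Ax=\lambda h(x,u_{k})$, I would argue exactly as before: if $\left\vert x_{k}\right\vert _{R^{n}}\leq \theta _{1}$ there is nothing to prove; otherwise, multiplying by $x_{k}$ and combining $\left( Ax_{k},x_{k}\right) _{R^{n}}\geq \lambda _{1}|x_{k}|_{R^{n}}^{2}$ with the bound $\left( h(x_{k},u_{k}),x_{k}\right) _{R^{n}}\leq \nu |x_{k}|_{R^{n}}^{\mu }$ from \textbf{A3} gives $\lambda _{1}|x_{k}|_{R^{n}}^{2-\mu }\leq \lambda \nu $, and since $2-\mu >0$ this bounds $\left\{ x_{k}\right\} _{k=1}^{\infty }$. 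Then I extract a convergent subsequence $x_{k_{i}}\rightarrow \widetilde{x}$, pass to the limit in $Ax_{k_{i}}=\lambda h(x_{k_{i}},u_{k_{i}})$ using continuity of $h$ and $u_{k_{i}}\rightarrow \overline{u}$ to get $A\widetilde{x}=\lambda h(\widetilde{x},\overline{u})$, and conclude $\widetilde{x}=\overline{x}$ by the uniqueness just established for the parameter $\overline{u}$.

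I do not anticipate a real obstacle here; the only point demanding care is that positive definiteness of $A$ is used twice — once so that the least eigenvalue $\lambda _{1}$ beats $\lambda b$ and makes $K$ strongly monotone (which is what forces the range $\lambda \in \left( 0,\frac{\lambda _{1}}{b}\right) $), and once for the a priori estimate $\left( Ax,x\right) _{R^{n}}\geq \lambda _{1}|x|_{R^{n}}^{2}$ that, played against the subquadratic growth in \textbf{A3}, keeps the solutions bounded — whereas in the superquadratic case the second role was filled by the cheap bound $\left( Ax,x\right) _{R^{n}}\leq \left\Vert A\right\Vert |x|_{R^{n}}^{2}$, which needs no definiteness.
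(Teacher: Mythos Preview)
Your proof is correct and follows essentially the same approach as the paper: the same operator $K(x)=Ax-\lambda h(x,u)$, the same strong monotonicity estimate $(Kx-Ky,x-y)_{R^{n}}\geq (\lambda_{1}-\lambda b)|x-y|_{R^{n}}^{2}$ via \textbf{A4} and positive definiteness, and the same a priori bound from pairing the equation with $x_{k}$ and playing $(Ax_{k},x_{k})_{R^{n}}\geq \lambda_{1}|x_{k}|_{R^{n}}^{2}$ against \textbf{A3}. Your write-up is in fact slightly more careful than the paper's, which omits the $\lambda$ in the boundedness step and contains a typographical slip in the resulting inequality.
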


\begin{proof}
The proof is similar to that of Theorem \ref{TwDepPam} apart from the
existence part and the counterpart of (\ref{estim_s}). We define a
continuous operator $K_{1}:R^{n}\rightarrow R^{n}$ by $Kx=Ax-\lambda h(x,u)$
and we fix $u\in M$. We get for $x,y\in R^{n}$ that 
\begin{equation*}
(Ax-Ay,x-y)_{R^{n}}-\lambda (h(x,u)-h(y,u),x-y)_{R^{n}}\geq \left( \lambda
_{1}-\lambda b\right) |x-y|^{2}\text{.}
\end{equation*}%
\ Hence $K_{1}$ is strongly monotone operator as long as $\lambda \in (0,%
\frac{\lambda _{1}}{b})$.

In order to obtain a counterpart of (\ref{estim_s}) we multiply both sides
of (\ref{pomequ}) with $x=x_{k}$ by $x_{k}$. Using \textbf{A3} and the
inequality $\left( Ax_{k},x_{k}\right) _{R^{n}}\geq \lambda
_{1}|x_{k}|_{R^{n}}^{2}$, we have $\nu |x_{k}|_{R^{n}}^{\mu -2}\geq \lambda
_{1}|x_{k}|_{R^{n}}^{2}.$
\end{proof}

\section{Final comments}

Any discrete boundary value problem which can be considered as a nonlinear
system can be investigated by our approach. As an example we may mention the
discrete version of the Emden-Fowler equation which originated in the
gaseous dynamics in astrophysics. This discretization received some
attention lately mainly by the use of critical point theory, see for
example, \cite{HeWe}, \cite{AppMathLett}, with results pertaining to the
existence of solutions and their multiplicity. In \cite{HeWe} the discrete
Emden-Fowler equation is considered in a form of a nonlinear system and
therefore falls within the framework which we just described.


\begin{thebibliography}{9}
\bibitem{deimling} K. Deimling, Nonlinear Functional Analysis,
Springer-Verlag, Berlin 1985.

\bibitem{galewskiCOERCIVE} M. Galewski, Dependence on parameters for
discrete second order boundary value problems, accepted \textit{J.
Difference Equ. Appl., }arXiv:0912.5224v1

\bibitem{HeWe} X. He, X. Wu, Existence and multiplicity of solutions for
nonlinear second order difference boundary value problems, \textit{Comput.
Math. Appl.} \textbf{57} (2009), 1-8.

\bibitem{LedzewiczWalczak} U. Ledzewicz, H. Sch\"{a}ttler, S. Walczak,
Optimal control systems governed by second-order ODEs with Dirichlet
boundary data and variable parameters, \textit{Ill. J. Math.} \textbf{47}
(2003), no. 4, 1189-1206.

\bibitem{AppMathLett} F. Lian, Y. Xu, Multiple solutions for boundary value
problems of a discrete generalized Emden--Fowler equation, \textit{Appl.
Math. Lett.} \textbf{23} (2010), no. 2, 8-12.

\bibitem{YanZhangAPMacc} Y. Yang, J. Zhang, Existence of solutions for some
discrete boundary value problems with a parameter, \textit{Appl. Math.
Comput.} \textbf{211} (2009), no. 2, 293-302.
\end{thebibliography}
\end{document}